\title{Speed-Aware Network Design: A Parametric Optimization Approach}
\author{Ugo Rosolia}{Amazon Science \& Tech, Luxembourg}{urosolia@amazon.lu}{https://orcid.org/0000-0002-1682-0551}{}
\author{Marc Bataillou Almagro}{Amazon Science \& Tech, Luxembourg}{maalmagr@amazon.fr}{}{}
\author{George Iosifidis}{Delft University of Technology, Delft}{G.Iosifidis@tudelft.nl}{George Iosifidis}{}
\author{Martin Gross}{Amazon Science \& Tech, Luxembourg}{grosmar@amazon.lu}{}{}
\author{Georgios Paschos}{Amazon Science \& Tech, Luxembourg}{paschosg@amazon.lu}{https://orcid.org/0000-0002-5922-1612}{}
\authorrunning{U. Rosolia et al.} 
\keywords{Network Design, Transportation Networks, Mixed-Integer Programming, Speed-Coverage, Parametric Optimization}
\newcommand{\speedFunc}{I_d}
\newcommand{\speedFuncApp}{\tilde{I}_d}
\newcommand{\cvx}{\textrm{Cvx}}
\begin{document}

\maketitle

\begin{abstract}
Network design problems have been studied from the 1950s, as they can be used in a wide range of real-world applications, e.g., design of communication and transportation networks. In classical network design problems, the objective is to minimize the cost of routing the demand flow through a graph. 
In this paper, we introduce a generalized version of such a problem, where the objective is to tradeoff routing costs and delivery speed;
we introduce the concept of \textit{speed-coverage}, which is defined as the number of unique items that can be sent to destinations in less than 1-day. 
Speed-coverage is a function of both the network design and the inventory stored at origin nodes, e.g., an item can be delivered in 1-day if it is in-stock at an origin that can reach a destination within 24 hours.  
Modeling inventory is inherently complex, since inventory coverage is described by an integer function with a large number of  points (exponential to the number of origin sites), each one to be evaluated using historical data. To bypass this complexity, we first leverage a parametric optimization approach, which converts the non-linear joint routing and speed-coverage optimization problem into an equivalent mixed-integer linear program.
Then, we propose a sampling strategy to avoid evaluating all the points of the speed-coverage function. The proposed method is evaluated on a series of numerical tests with representative scenarios and network sizes. We show that when considering the routing costs and monetary gains resulting from speed-coverage, our approach outperforms the baseline by 8.36\% on average.
\end{abstract}

\section{Introduction}\label{sec:introduction}



\subsection{Background \& Motivation}

Expedite delivery services are becoming increasingly important for e-commerce supply chains such as Amazon, Alibaba and Walmart
as they improve directly customer experience and can indirectly contribute to attaining key sustainability goals.
 Indeed, the option of expedite delivery increases naturally the range of items customers are willing to purchase from e-commerce platforms instead of visiting physical retail (brick-and-mortar) shops, and this has been found to reduce transportation-based carbon emissions in many scenarios 
 \cite{co2-ecommerce2020, co2-ecommerce2017}. These reasons contribute to a growing volume of research aiming to improve expedite delivery by means of  last-mile routing optimization \cite{klein2022dynamic}, vehicle dispatch scheduling \cite{dispatch-TransScienc2018}, or innovative crowd-shipping models \cite{carbone2017rise}, among others. 

\begin{figure*}[t]
		\centering
		\includegraphics[trim= 0mm 0mm 0mm 0mm, clip, width=\linewidth]{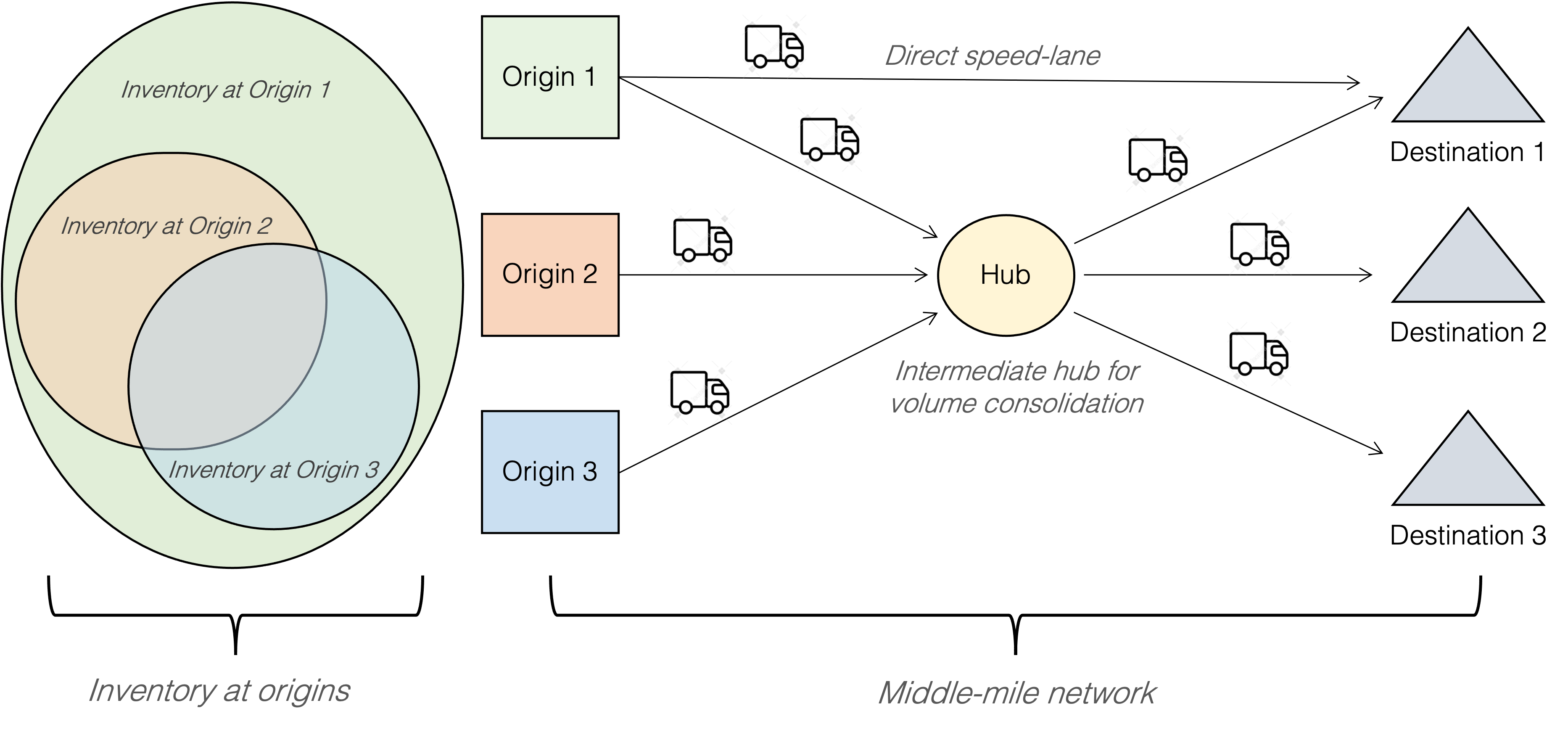}
		\captionsetup{width=.8\linewidth}
		\caption{\footnotesize{Middle-mile network design problem where destination nodes -- i.e., distribution centers -- are connected to  origin nodes -- i.e., warehouse where items are stored. Note that origin nodes have partially overlapping inventory, e.g., the inventory stored at Origin~2 and Origin~3 is also stored at Origin~1. Thus, connecting with speed-paths Origin~1 to destination nodes allows us to offer expedite delivery services for all items.}}\label{fig:high_level}
\end{figure*}

Nevertheless, the above works overlook the role of the middle-mile network connecting warehouses (origins) to distribution centers (destinations) in the efficacy and feasibility of such expedite delivery services. As the demand grows, items have to be stored at out-of-city warehouses, and therefore, the possibility of offering expedite delivery for these items is shaped by the network connections. This situation creates an unavoidable tension between cost and speed when designing the transportation network. To reduce costs we need to minimize the number of trucks used to ship orders to customers. Such a goal can be achieved by leveraging intermediate consolidation hubs between origin and destination nodes. Consider the example from Figure~\ref{fig:high_level}, where we have nine commodities — one for each origin-destination pair —  requiring one-third of truck’s capacity. By utilizing intermediate consolidation hubs, the network configuration from Figure~\ref{fig:high_level} requires only seven trucks to route packages. In contrast, serving these same nine commodities through \emph{speed-paths} -- i.e., paths directly connecting origins to destinations -- would require nine trucks, one for each origin-destination pair.  To tackle the tradeoff between volume consolidation and opening speed-paths, many studies formulate the middle-mile network design problem as a minimum-cost flow problem with maximum path-length constraints, or aim to balance transit times with path costs (see Sec. \ref{sec:lic_review}). 

Yet, a key factor that has been largely overlooked is the effect of the network connectivity on the share of inventory for which we can offer expedite deliver services.
In this paper, we propose to measure such an effect by evaluating the network's \textit{speed-coverage}, which we define as the number of unique items that can be delivered in 1-day. Speed-coverage is a function of ($a$) the network topology which determines the transit time from origins to destinations and ($b$) the inventory stored at origin nodes.
We recognize an inherent tradeoff: connecting an origin to a destination via a short route (speed-path)  increases the speed-coverage, but  may increase routing cost by making volume consolidation more difficult. Additionally, the benefit of speed-paths to speed-coverage exhibits diminishing returns. The more speed-paths we introduce, the less their differential impact to the overall speed-coverage objective. 
In this work, we formulate a problem that captures exactly these tradeoffs and then propose a technique to solve for the jointly optimal  routing costs and speed-coverage for the general case when origins have partially overlapping inventory. To the best of our knowledge, this paper is the first to introduce a mathematical formulation of these tradeoffs, and a scalable methodology for solving the optimization problem.

\subsection{Literature Review}\label{sec:lic_review}

As the importance of, and demand for, expedite delivery services grows, the design of speed-aware middle-mile networks becomes an increasing priority for service providers. Prior works on network design that cater for delivery speed include the hub-network design problem with time-definite delivery. In this setting, the objective is to decide hub locations together with routing paths, and speed deadlines are captured through path-eligibility constraints, see e.g., \cite{hub-location-campbell}. Similarly, \cite{hub-location-Transp21} optimizes the location of hubs, assignment of demand centers to hubs, and the vehicle routing. Expanding on these ideas, \cite{wu-snd-ts23} considers additionally hub capacity constraints; \cite{barnhart-express96} studies a single-hub overnight delivery system and optimizes routing subject to timing constraints; and \cite{yildiz-package-express} focuses on express air-service and decides which routes to operate with the company-owned cargo planes and how much capacity to purchase on commercial flights. Finally, \cite{dahan-lead-time} studies the middle-mile consolidation problem with delivery deadlines while accounting for consolidation delays. Similar models have been studied in the context of scheduled service network design that optimizes small (less-than-truckload) inter-city shipments with timing constraints for the delivery or intermediate hops, see \cite{hewitt-ssndp} and references therein. All the above works model the delivery time requirements through path-length constraints, e.g., a subset of origin-destination nodes are forced to be connected via paths that have transit time smaller than a predefined threshold. In contrast to this binary approach of enforcing time constraints, our work explicitly optimizes the tradeoff between speed and cost, providing a flexible framework for network design decisions.

Time-expanded graphs are one of the most common tools for capturing such timing restrictions but increase the problem's dimension and compound their solution, cf. \cite{konemann-time-expanded}. This modeling approach augments the dimension of the problem and therefore its complexity. For this reason, several approaches based on decomposition methods~\cite{yao2019admm, fragkos2021decomposition}, adaptive discretization techniques~\cite{boland2019perspectives}, and model condensation strategies~\cite{konemann-time-expanded} have been proposed. The key difference of our approach from these prior works is that we explicitly consider the overlap of inventory at origins to determine the number of unique items that are eligible for a 1-day delivery option. We do not assume to have an analytical model that captures how the delivery speed affects the inventory, but instead we provide a practical methodology for directly incorporating data-sets and look-up tables in the optimization problem. A key element of our strategy is a parametric linear model \cite{bank-parametric1982} which allows to express the inventory function in a compact and tractable form. Parametric optimization has been particularly successful for a range of applications, see \cite{pistikopoulos-multi-parametric}, but, to the best of the authors knowledge has not been employed for expedite delivery optimization in middle-mile networks.


\subsection{Methodology \& Contributions}\label{sec:met}


Jointly optimizing network design decisions and speed-coverage is a challenging problem. 
First of all, the network design problem, even without the speed-coverage objective, is already NP-hard to solve optimally when one has to ($a$) use unsplittable paths, ($b$) add integer link capacities (trucks), and ($c$) consider a large number of multi-hop path variables, \cite{routing-complexity, fortz-2017}. Secondly, the inventory effect of each new speed-path depends on which other origins are connected to a desination. In other words, speed-path assignment decisions are coupled across all origins serving a certain destination, but also across origins that use intersecting (at one or more edges) paths due to the edge capacities. Third, the inventory coverage function -- i.e., the function mapping a set of origins to the number of unique items stored at these nodes -- does not admit a convenient analytical expression. In fact, this function depends on the inventory overlap at origins, and in practice can be calculated using inventory datasets and look-up tables. This function format, unfortunately, does not facilitate --  actually prohibits -- its direct inclusion in the network optimization program, as it renders the problem highly nonlinear -- see equation~\eqref{eq:Id_function} from Section~\ref{sec:prob_formulation} for further details. 

Our method relies on multi-parametric optimization \cite{gal-multi-parametric, pistikopoulos-multi-parametric} to create a continuous (concave) approximation of the inventory function. This uses as input the available inventory data-points -- i.e., the number of unique items stored at a set of origin nodes -- and creates a continuous piecewise affine interpolation over the convex combination of the input data-points. We prove that this approximation is exact at the input data-points, and hence can serve as a meaningful proxy for maximizing this metric of interest. This, in turn, enables the inclusion of the inventory function in the network design problem, without inflating it with new discrete variables. The result of this new joint formulation is a \emph{speed-aware} middle-mile network, where the designer can tradeoff network costs with the number of unique items that can be delivered to customers in 1-day. Finally, we take an extra step to reduce the dimension of this problem through a sampling process, namely a low-complexity practical algorithm for selecting the approximation data-points used to approximate inventory overlap at origin nodes. 

In summary, the main contributions of this paper are as follows: $(i)$ We introduce the speed-aware network design problem of jointly optimizing network costs and speed-coverage, which we define as the share of unique items for which we can offer a 1-day delivery option. $(ii)$ We introduce a parametric-based modeling approach for enabling this joint optimization, overcoming the lack of a tractable analytical expressions for the number of unique items stored at a set of origin nodes. $(iii)$ We further propose a simple and practical sampling algorithm for reducing the size of the approximation problem, trading off interpolation accuracy in a systematic fashion. $(iv)$ The proposed joint model and approximation strategy are evaluated numerically in extensive tests on representatives scenarios.

\subsection{Paper Organization \& Notation} 

The rest of this paper is organized as follows. Section \ref{sec:prob_formulation} introduces the speed-aware minimum cost Multicommodity Capacitated Network Design problem under study. Section~\ref{sec:prop_approach} describes the proposed  reformulation based on a parametric interpolation, and the related approximation strategy; and Section~\ref{sec:experiments} presents a series of numerical experiments with representative scenarios and different network sizes. We conclude and present future works in Section~\ref{sec:conclusions}.

Throughout the paper we define the sets of (non-negative) reals and integers as ($\mathbb{R}^n_{+}$)$\mathbb{R}^n$ and ($\mathbb{Z}^n_{+}$)$\mathbb{Z}^n$, respectively. Vectors are denoted by boldface lowercase letters and sets by uppercase italic letters, e.g., $\boldsymbol{v} \in \mathbb{R}^n$ and $\mathcal{C}$. Given a set of vectors $\mathcal{C} = \{ \boldsymbol{v}_1, \boldsymbol{v}_2\}$, we denote the cardinality of $\mathcal{C}$ as $|\mathcal{C}|$ and its convex hull as $\cvx(\mathcal{C})$. Finally, we define the vectors of zeros $\boldsymbol{0}_{n} \in \mathbb{R}^n$, ones $\boldsymbol{1}_{n} \in \mathbb{R}^n$, and the unit base vector of zeros having one only in the $i$th component as $\boldsymbol{e}_n^i =[0, \ldots, 0, 1, 0, \ldots, 0]^\top \in \mathbb{R}^n$. 


%
%
%
%
%
%
%
%
%
%
%
%
%
%
%
%
%
%

%
%
%
%
%
%
%
%
%
%
%
%
%

%

\section{Model and Problem Formulation}\label{sec:prob_formulation}

\subsection{Minimum cost MCND-U}
We first introduce the standard \emph{minimum cost Multicommodity Capacitated Network Design with Unsplittable demands} (MCND-U) problem, which aims to assign one path to each commodity and \emph{open} trucks on links to transport orders at minimum cost. 
Our network is modeled with a directed graph $\mathcal{G} = (\mathcal{V}, \mathcal{E})$, where $\mathcal{V}= \mathcal{O} \cup \mathcal{D} \cup \mathcal{H}$ is the set of nodes,  consisting of the set $\mathcal{O}$ of $n_O$ origins, the set $\mathcal{D}$ of $n_D$ destinations and the set $\mathcal{H}$ of $n_H$ other interim  nodes (or hubs), and $\mathcal{E}$ is the set of directed links. We are given a set $\mathcal{K}$ of $K$ commodities, where each commodity $k = (o_k, d_k)$ originates from an origin $o_k \in \mathcal{O}$ towards a destination $d_k\in \mathcal{D}$, and has volume $v_k\!\geq\! 0$.  We are also given a set of active network paths $\mathcal{P}$, where $\mathcal{P}_k$ is used to denote the active paths of commodity $k$ and $\mathcal{P}_{k\ni e}\subseteq \mathcal{P}_k$ the active commodity $k$ paths that traverse link $e \in \mathcal{E}$. Our goal is to assign exactly one path to each commodity 
and we do so  using the path selection vector $\textbf{x} =\left(x_p\in \{0, 1\}, p\in \mathcal{P}\right)$.\footnote{We do not need to index variables $x$ with commodities because each path $p$ can only be used by a single commodity, the one that corresponds to the origin-destination pair of the path.} Furthermore, the links have capacities that depend on our choice of opened trucks, where opening a truck costs $c_e$ on link $e\in \mathcal{E}$ and adds capacity $V_e\in  \mathbb{R}^n_{+}$. We use the truck deployment vector $\textbf{y} =\left(y_e\in \mathbf{Z}_+, e\in \mathcal{E}\right)$ to decide how many trucks are opened on each link. The minimum cost MCND-U problem is to assign paths to commodities with $\textbf{x}$ and open trucks with $\textbf{y} $ in order to transport the volumes at a minimum cost:
\begin{equation}
    \begin{aligned}\label{eq:prob_formulation_cost}
 \mathbb{P}_1: \qquad		\min_{\textbf{x}, \textbf{y}} \quad & \sum_{e \in \mathcal{E}} c_e y_e \\ 
		\text{subject to} \quad & \sum_{k \in \mathcal{K}}\sum_{p \in \mathcal{P}_{k\ni e}} v_k x_p \leq V_e y_e && \forall e \in \mathcal{E}, \\
		&\sum_{p \in \mathcal{P}_k} x_p = 1 && \forall k \in \mathcal{K}, \\		
		& x_p \in \{0,1\} &&\forall k \in \mathcal{K}, p \in \mathcal{P}_k, \\
		& y_e \in \mathbf{Z}_{+} &&\forall e \in \mathcal{E}.
\end{aligned}
\end{equation}
Problem $\mathbb{P}_1$ is the standard MCND-U problem studied in the literature, for instance see \cite{gendron1999multicommodity}, which is known to be \textit{NP}--hard \cite{routing-complexity, fortz-2017} -- See Section~\ref{sec:met} for details.  

\subsection{Modeling network speed}

In this work we augment the MCND-U problem with a novel  speed model, that captures the number of customer orders that can be delivered within a day, a service that is called \emph{Next Day Delivery} (NDD), see \cite{benidis2023middle}. Let us consider a destination node $d$, which serves a number of customers in a given area. Whenever a customer in that area makes an order, we say that the order is \emph{covered} (with NDD service) if  the ordered item is stored in any of the origins that are connected to $d$ with a \emph{short} path, where a path is said to be short if its total transit time is less than an input parameter; if path $p$ is short we will write $w_p=1$, else  it is long and we write $w_p=0$. Notice, we require two conditions for NDD: \emph{(i)} the ordered item is in the storage of some origin node, and \emph{(ii)} that origin node is connected to $d$ with a path that is short. First, let us study how we measure NDD coverage in a scenario where \emph{(ii)} is always satisfied, i.e., for now, we consider networks where all paths are short. We introduce the coverage function $I_d$ which counts the number of covered orders at a destination $d$.
It is typically impossible to derive the analytical form of $I_d$, but it is reasonable to estimate $I_d$ from available large datasets; using these datasets, one can compute an estimate of the number of covered unique NDD orders. 

\begin{figure}[h!]
	\centering
	\begin{subfigure}{0.45\textwidth}
		\centering
		\includegraphics[trim= 0mm 0mm 0mm 0mm, clip, width=1.0\linewidth]{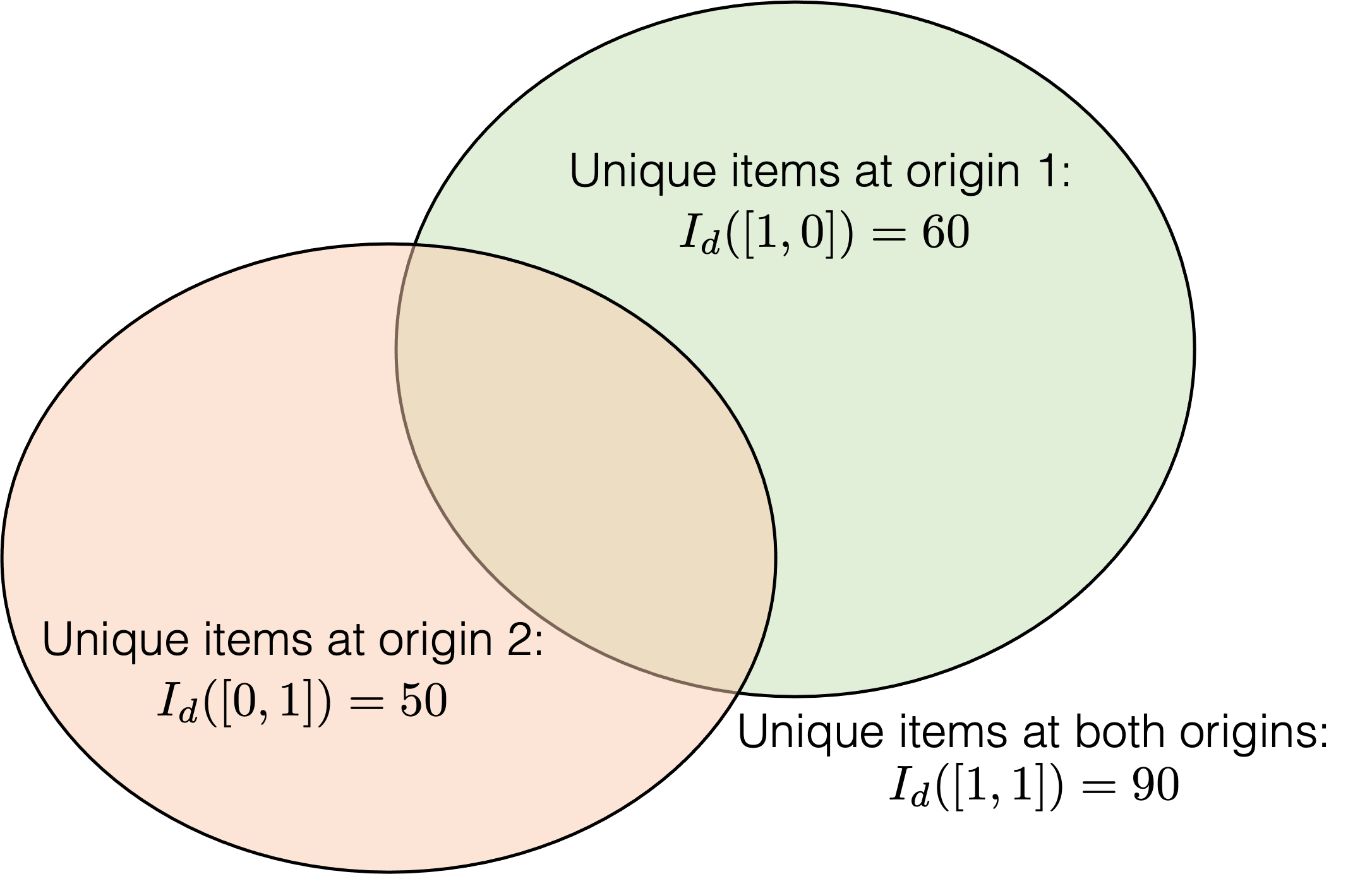}
		\captionsetup{width=1.0\linewidth}
	\end{subfigure}
	\caption{Venn diagram of two origin nodes storing items that are being ordered; although the sum of individually unique ordered items  stored in the two nodes is $50+60=110$, the actual total unique items  are only $90$ due to overlap between the two inventories.  
	}
	\label{fig:example_counting_gv}
\end{figure}

For example, see Figure~\ref{fig:example_counting_gv}. Assuming both origins of Figure~\ref{fig:example_counting_gv} are connected to $d$ with short paths, $I_d$ would evaluate to 90 items in this example, and we may imagine that in more complicated scenarios, the computation of $I_d$ would boil down into counting unique orders in large datasets of requests and inventories. 
The model, however, becomes more interesting when we study networks that include long paths, where condition \emph{(ii)} is not always satisfied. In such a case, depending on our path assignment decisions $\textbf{x}$, an origin-destination pair $(o,d)$ may not be  able to transport items eligible for NDD. For instance, if $(o,d)$ commodity is assigned a  long path, the contribution of $o$-inventory to $I_d$ should not be counted because although the inventory is available, its delivery takes more than one day. We should then adjust the coverage function $I_d$ to reflect this. A naive approach would be to adjust it to $I_d(\textbf{x})$ to denote the dependence on path assignment, 
but instead we use an alternative approach that   reduces significantly the dimensions, and hence the complexity of computing this function: 
we use the dependent speed variables 
\begin{equation}\label{eq:speed_variable}
    \textbf{z} =(z_{od}, (o,d)\in \mathcal{K}),
\end{equation} 
where $z_{od}=1$ if the path selected to connect $o$ to $d$ is short. Since we have only one assigned path per commodity (i.e., it holds $\sum_{p\in \mathcal{P}_k} x_p = 1$) it follows that
\[
z_{od}= \sum_{p\in \mathcal{P}_k} w_p x_p, ~~ \forall k=(o,d),
\]
where recall that $w_p =1$ if $p$ is short, and $0$ otherwise.
Using  variables $\textbf{z}$ we can write down the form of $I_d$ for the toy scenario of Figure~\ref{fig:example_counting_gv}:
\begin{equation}
\speedFunc(\textbf{z})= 60z_{1d}(1-z_{2d})+ 50z_{2d}(1-z_{1d})+90z_{1d}z_{2d}. \label{eq:example-Ud}
\end{equation}
More broadly, the function will have the form:
\begin{equation}\label{eq:Id_function}
\speedFunc(\textbf{z})= \sum_{s\in \mathcal{P}(\mathcal{O}_d)} \beta_s \prod_{i\in s} z_{id} \prod_{j \notin s}  (1-z_{jd}), 
\end{equation}
where $\mathcal{P}(.)$ is the power set, $\mathcal{O}_d$ is the set of origins that form a commodity with $d$, and $\beta_s$ is the unique item count of subset $s$ of origins -- we may obtain parameters $\beta_s$ by performing counting operations on our large datasets.
Note, that this function would need to have many more terms if we were to use path assignment variables $\textbf{x}$ (one additive term for each element in the powerset of active paths), hence we have effectively reduced the number of times we need to count parameters in large datasets  by an exponential factor. Also, observe that the form of \eqref{eq:Id_function} demonstrates well how deeply intertwined is the speed objective $\speedFunc(\textbf{z})$ with the cost optimization of  the MCND-U problem \eqref{eq:prob_formulation_cost}, both depending on our choice of $\textbf{x}$. Some paths may offer better consolidation by mixing volume in consolidation hubs, but because they are longer, they result in smaller NDD coverage. In the next subsection we augment the standard MCND-U problem with our newly introduced speed model.

\subsection{Speed-aware MCND-U}
The NDD coverage impacts customer satisfaction and shapes  long-term revenues. We model this effect with a \emph{customer conversion factor} $\gamma\!>\!0$; in practice, this factor can be estimated by analyzing customer behaviors via A/B testing. Therefore, our focus in this paper is to minimize the transportation costs and maximize long-term revenues from NDD coverage, which is formalized in the following  optimization program:
\begin{equation}\label{eq:prob_formulation}
\begin{aligned}
	\mathbb{P}_2:\qquad	\min_{\boldsymbol{x}, \boldsymbol{y}, \boldsymbol{z}} \quad & \sum_{e \in \mathcal{E}} c_e y_e - \gamma \sum_{d \in \mathcal{D}} \speedFunc(\boldsymbol{z})\\ 
		\text{subject to} \quad & \sum_{k \in \mathcal{K}}\sum_{p \in \mathcal{P}_{k\ni e}} v_k x_p \leq V_e y_e && \forall e \in \mathcal{E}, \\
		&\sum_{p \in \mathcal{P}_k} x_p = 1 && \forall k \in \mathcal{K}, \\
  & z_{od}=\sum_{p\in \mathcal{P}_k} w_p x_p, && \forall k=(o,d)\in\mathcal{K},\\
		& x_p \in \{0,1\} &&\forall k \in \mathcal{K}, \ p \in \mathcal{P}_k, \\
		& y_e \in \mathbf{Z}_{+} &&\forall e \in \mathcal{E}, \\
		& z_{od} \in\{0,1\} &&\forall o\in \mathcal{O}, d\in \mathcal{D}.
	\end{aligned}
\end{equation}

Problem $\mathbb{P}_2$ is extremely difficult to solve. Evidently, it is a generalization of the \textit{NP}--hard MCND-U, but a major additional  complexity factor is function  $I_d$. This coverage function is known to be submodular \cite{benidis2023middle}, i.e., the more short paths we assign (by switching $\boldsymbol{z}$ to $\boldsymbol{z'}$), the smaller  is the benefit $I_d(\boldsymbol{z'})-I_d(\boldsymbol{z})$ per added short path, due to overlapping inventory between origins. The domain of function $I_d(\boldsymbol{z})$  has $2^{n_O}$ points, each one of which requires a full computation on the large dataset. In fact, it is known that even finding the maximum point of such a function is an NP--hard problem \cite{nemhauser1978analysis, krause2014submodular}, let alone considering it inside a broader optimization problem as in $\mathbb{P}_2$. Last, observe that the objective of $\mathbb{P}_2$ is non-linear, see for example \eqref{eq:Id_function}. From previous studies in the space of maximum coverage problem \cite{ageev2004pipage} we know that the non-linear components of  \eqref{eq:Id_function} would make even the continuous relaxation of $\mathbb{P}_2$ \textit{NP}--hard.
For realistic e-commerce scenarios with multiple origins serving each destination, and a network with many destinations, solving $\mathbb{P}_2$ or even obtaining a lower bound via its continuous relaxation is intractable. 

In this paper, we overcome this challenge by leveraging multi-parametric programming \cite{gal-multi-parametric, pistikopoulos-multi-parametric}  to create an interpolation (or continuous extension) of integer functions $\speedFunc, d\in \mathcal{D}$, which in turn allows us to propose a solution methodology for the speed-aware MCND-U. 

\section{Solution Approach}\label{sec:prop_approach}

Our solution methodology is based on replacing the term $\speedFunc(\boldsymbol{z})$ in \eqref{eq:prob_formulation}  with a continuous extension, by interpolating integer points using parametric optimization. As we explain below, this interpolation is carefully designed to lead to a concave function, which eventually allows us to reformulate the speed-aware MCND-U as a Mixed-Integer Linear Programming (MILP). Finally, to reduce the dimensions and make the resulting problem tractable, we utilize an additional approximation technique to eliminate many of the integer points, and only consider a subset of them.

\begin{figure}[t]
	\centering
	\begin{subfigure}{0.465\textwidth}
		\centering
		\includegraphics[trim= 0mm 0mm 0mm 0mm, clip, width=1.0\linewidth]{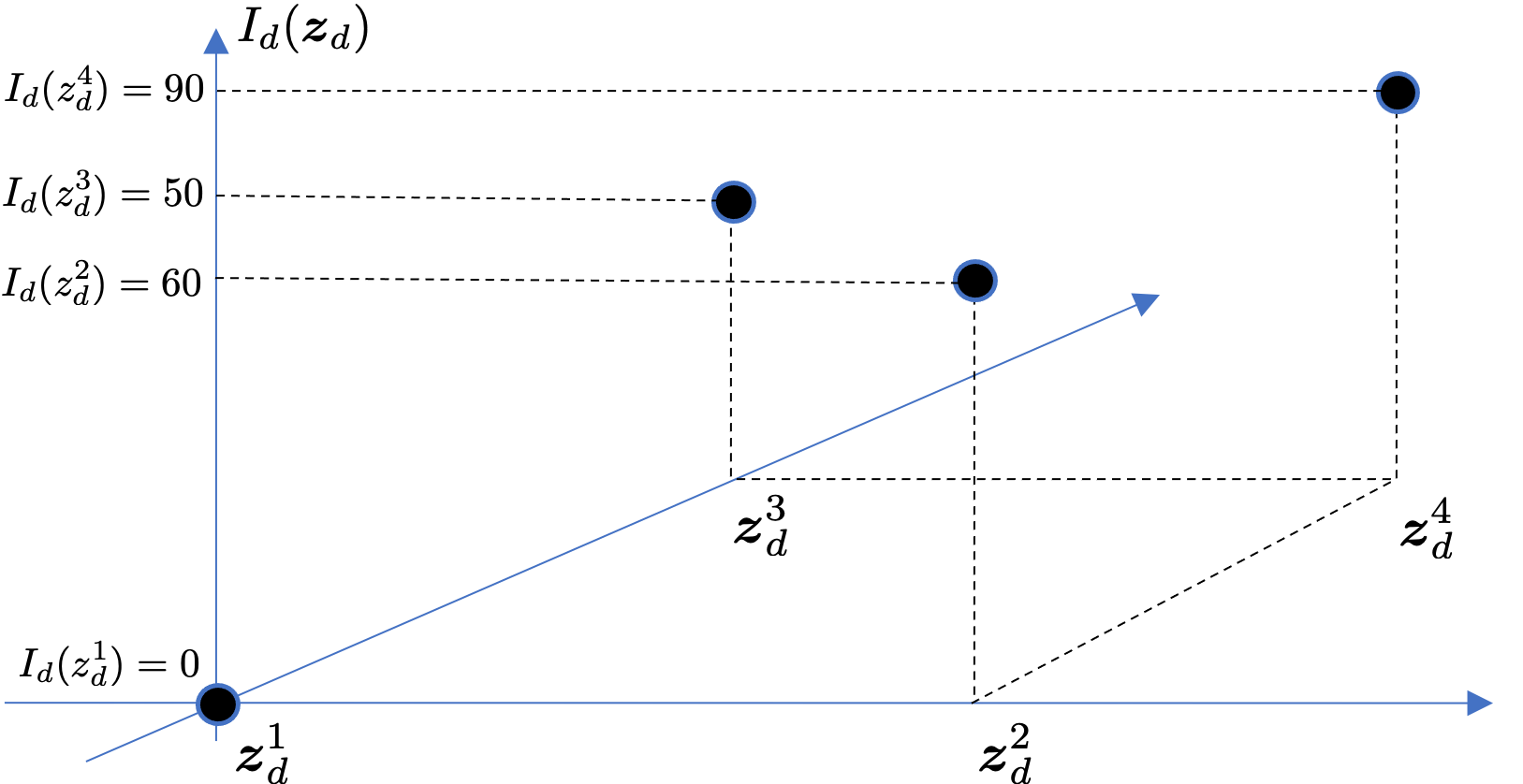}
		\captionsetup{width=.975\linewidth}
		\caption{\footnotesize{Dataset of speed-path assignments and unique items stored at origin nodes.}}\label{fig:interGV_construction}
	\end{subfigure}%
	\quad\quad~
	\begin{subfigure}{0.465\textwidth}
		\centering
		\includegraphics[trim= 0mm 0mm 0mm 0mm, clip, width=1.0\linewidth]{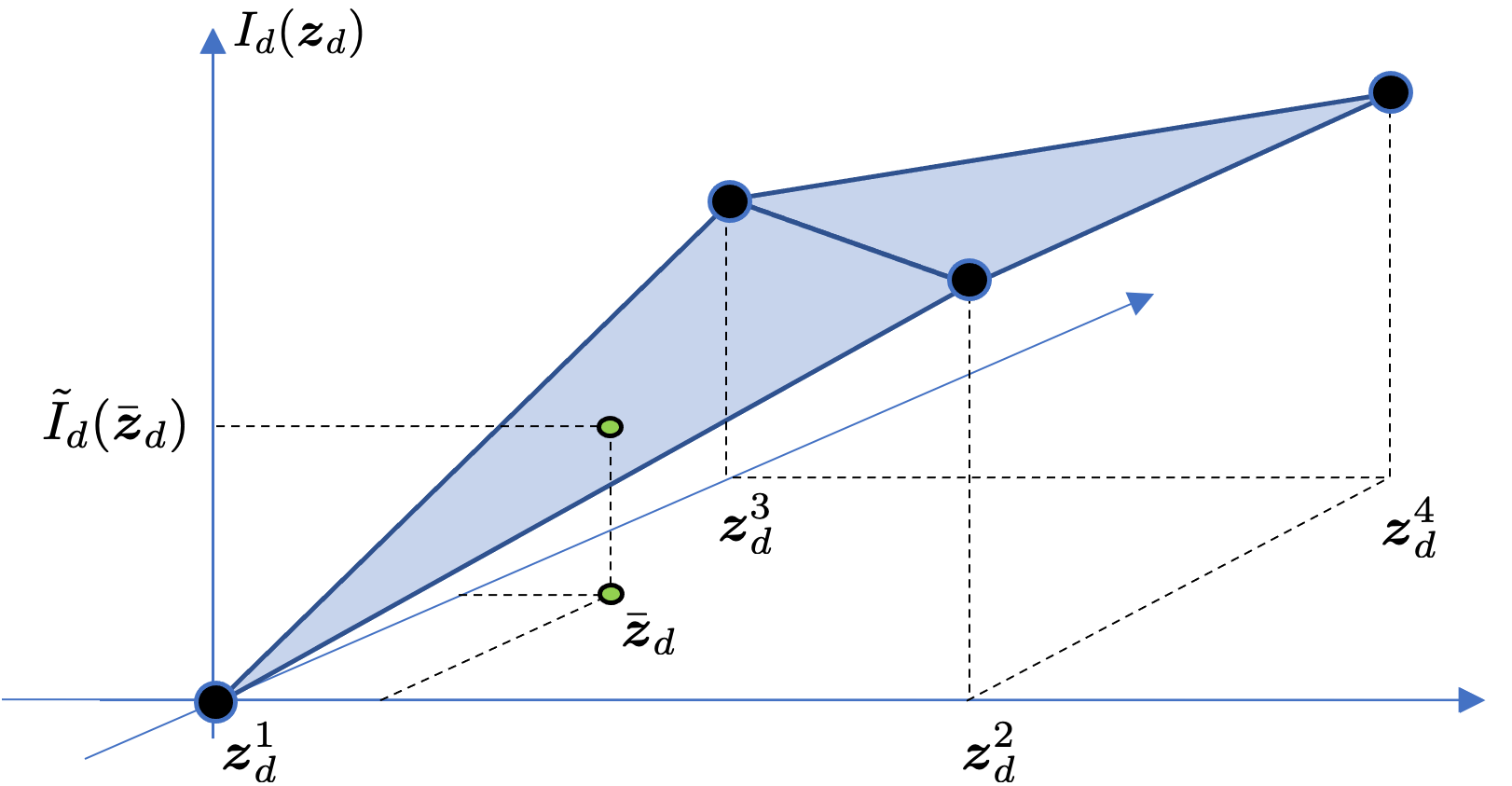}
		\captionsetup{width=.975\linewidth}
		\caption{\footnotesize{Piecewise linear interpolation of the speed-path assignement from Figure~\ref{fig:interGV_construction}.}}\label{fig:interGV_evaluation}
	\end{subfigure}
	\caption{Illustration of the approximation strategy for the example from Figure~\ref{fig:example_counting_gv}, where a destination $d$ is connected to Origin 1 and Origin 2.}
	\label{fig:2d_interpolation_example}
	\vspace{-0.4cm}
\end{figure}

\subsection{Parametric interpolation of unique inventory}\label{sec:param}


We will interpolate between  the integer points of function $I_d$, which are denoted with $\textbf{z}$ and introduced in~\eqref{eq:speed_variable}. For reasons that will become clear below when we present our approximation technique, we will introduce additional notation $\textbf{z}_d^i$ to enumerate the vector pointing to the $i^{\text{th}}$ integer point of $\speedFunc(\boldsymbol{z})$. Hence, 
\begin{equation}\label{eq:set_CPT}
	\mathcal{Z}_d = \left\{ \textbf{z}_d^1, \textbf{z}_d^2, \ldots, \textbf{z}_d^{n_\texttt{comb}} \right\},
\end{equation}
is another way to express the domain of $\speedFunc(\textbf{z})$ and we have $n_\texttt{comb} = 2^{n_O}$. See for example Figure~\ref{fig:interGV_construction}.

For any vector $\textbf{z}^i_d$, we can use inventory data to compute the number of unique items $I_d(\textbf{z}^i_d)$ for which we can offer NDD at $d \in \mathcal{D}$.  Then, for a vector $\textbf{z}_d$ in the convex hull of $\mathcal{Z}_d$, we can use the following optimization problem to interpolate $I_d(\textbf{z}^i_d)$:
\begin{equation}\label{eq:interpGV}
	\begin{aligned}
		\speedFuncApp(\textbf{z}_d) = \max_{\boldsymbol{\alpha_d} \in \mathbb{R}^{n_\texttt{comb}}_{+}} \quad &  \sum_{i= 1}^{n_\texttt{comb}} \alpha^i_d I_d(\textbf{z}_d^i) \\
		\text{subject to} \quad &  \sum_{i = 1}^{n_\texttt{comb}} \alpha_d^i \textbf{z}_d^i = \textbf{z}_d, \\
		&\sum_{i = 1}^{n_\texttt{comb}} \alpha_d^i = 1.  
	\end{aligned}
\end{equation}
Notice that problem~\eqref{eq:interpGV} is a linear parametric program and therefore $\speedFuncApp$ is a concave piecewise linear function~\cite[Chapter~6]{borrelli2017predictive}; we may think of $\speedFuncApp$ as an extension of $\speedFunc$, referred to in the literature as the concave closure~\cite{vondrak2011submodular}. We will  leverage the concavity of the extended function $\speedFuncApp$ to reformulate \eqref{eq:prob_formulation} as a MILP.  Note, that submodular functions may also be approximated by their convex closure, which is achieved by replacing the min operator with the max in~\eqref{eq:interpGV}. In this work, we have used the concave closure as this allows us to reformulate the speed-aware network design problem \eqref{eq:prob_formulation} as mixed-integer linear minimization problem, whereas using the convex closure would have resulted in a min-max mixed integer optimization problem.

We highlight that the interpolation from~\eqref{eq:interpGV} is exact at all integer points $\textbf{z}_d \in \mathcal{Z}_d$, since for each integer point $\textbf{z}_d^j \in \mathcal{Z}_d$ we must have that $\textbf{z}_d^j = \sum_{i=1}^{n_{\texttt{comb}}}\alpha_d^i \textbf{z}_d^i$, hence $\alpha_d^j = 1$ and $\speedFuncApp(\textbf{z}^j_d) = \alpha_d^j \speedFunc(\textbf{z}^j_d) =   \speedFunc(\textbf{z}_d^j)$. Note that $\speedFuncApp$ is exact at the integer points only when each $\textbf{z}_d^i \in \mathcal{Z}_d$ is an extreme point, i.e., it cannot be expressed as a convex combination of the vectors in $\{\mathcal{Z}_d \setminus \textbf{z}_d^i$\}. This condition holds in our case, and yields the following proposition:
\begin{proposition}\label{Prop1}
Let the function $\speedFuncApp$ be defined by the parametric program~\eqref{eq:interpGV}. For all integer points $\textbf{z}_d \in \mathcal{Z}_d$ we have that
\begin{equation*}
    \speedFunc(\boldsymbol{z}_d) = \speedFuncApp(\boldsymbol{z}_d).
\end{equation*}
\end{proposition}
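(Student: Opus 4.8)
The plan is to exploit the fact that the points collected in $\mathcal{Z}_d$ are precisely the $2^{n_O}$ binary vectors, i.e., the vertices of the unit hypercube $[0,1]^{n_O}$, so that each $\textbf{z}_d^i$ is an extreme point of $\cvx(\mathcal{Z}_d)$. The decisive observation is that an extreme point admits only the trivial representation as a convex combination of the points of $\mathcal{Z}_d$. Consequently, when the parameter $\textbf{z}_d$ in~\eqref{eq:interpGV} is set equal to an integer point, the feasible set of the parametric program collapses to a single weight vector $\boldsymbol{\alpha}_d$, the maximization becomes vacuous, and the objective is forced to equal $\speedFunc$ evaluated at that point.

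Concretely, I would fix an arbitrary index $j$ and set the parameter to $\textbf{z}_d = \textbf{z}_d^j \in \mathcal{Z}_d$. I then examine the two equality constraints $\sum_i \alpha_d^i \textbf{z}_d^i = \textbf{z}_d^j$ and $\sum_i \alpha_d^i = 1$ jointly with the sign constraint $\boldsymbol{\alpha}_d \ge \boldsymbol{0}_{n_\texttt{comb}}$. Because every coordinate of $\textbf{z}_d^j$ is either $0$ or $1$, the argument proceeds coordinate-by-coordinate. For a coordinate $\ell$ with $(\textbf{z}_d^j)_\ell = 0$, the identity $\sum_i \alpha_d^i (\textbf{z}_d^i)_\ell = 0$ together with non-negativity forces $\alpha_d^i = 0$ for every point $\textbf{z}_d^i$ having a $1$ in coordinate $\ell$. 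For a coordinate $\ell$ with $(\textbf{z}_d^j)_\ell = 1$, combining the two equalities gives $\sum_i \alpha_d^i\bigl(1 - (\textbf{z}_d^i)_\ell\bigr) = 0$, which by the same non-negativity argument forces $\alpha_d^i = 0$ for every point having a $0$ in coordinate $\ell$. Intersecting these vanishing conditions over all $\ell$ leaves only $\textbf{z}_d^j$ with possibly nonzero weight, and the normalization then pins down $\boldsymbol{\alpha}_d = \boldsymbol{e}_{n_\texttt{comb}}^j$.

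Having established that $\boldsymbol{e}_{n_\texttt{comb}}^j$ is the unique feasible point, I would conclude that the objective value of~\eqref{eq:interpGV} reduces to $\sum_i \alpha_d^i \speedFunc(\textbf{z}_d^i) = \speedFunc(\textbf{z}_d^j)$, whence $\speedFuncApp(\textbf{z}_d^j) = \speedFunc(\textbf{z}_d^j)$. Since $j$ was arbitrary, the identity holds at every integer point of $\mathcal{Z}_d$, which is exactly the claim.

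I expect the only delicate step to be the uniqueness-of-representation argument, namely verifying rigorously that the two affine constraints pin $\boldsymbol{\alpha}_d$ down to a single vertex. The coordinate-wise computation above settles this cleanly, but one must be careful to use \emph{both} equality constraints rather than either in isolation: the simplex constraint $\sum_i \alpha_d^i = 1$ is essential for handling the coordinates where $(\textbf{z}_d^j)_\ell = 1$. A shorter alternative would be to invoke the standard polytope fact that a point is a vertex if and only if it is not a proper convex combination of other points of the polytope, a characterization the excerpt already relies on; the explicit coordinate argument is simply the self-contained version of that statement, which I would favor here since $\mathcal{Z}_d$ is the complete vertex set of the hypercube.
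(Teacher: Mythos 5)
Your proof is correct and follows essentially the same route as the paper's: both arguments rest on the observation that each $\textbf{z}_d^i$ is a vertex of the unit hypercube, so the only feasible weight vector in~\eqref{eq:interpGV} is the corresponding unit vector $\boldsymbol{e}_{n_\texttt{comb}}^j$, forcing $\speedFuncApp(\textbf{z}_d^j)=\speedFunc(\textbf{z}_d^j)$. The only difference is that you spell out the uniqueness of that representation via an explicit coordinate-by-coordinate computation, whereas the paper simply invokes the extreme-point property $\textbf{z}_d^i \notin \cvx(\mathcal{Z}_d \setminus \textbf{z}_d^i)$.
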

\begin{proof}
Consider the optimization~\eqref{eq:interpGV} evaluated at $\textbf{z}_d^i$, i.e., $\tilde{I}(\textbf{z}_d^i)$. By definition we have that $\textbf{z}_d^i$ are vertices of a hypercube and hence $\textbf{z}_d^i \notin \cvx(\mathcal{Z}_d \setminus \textbf{z}_d^i)$ for all $\textbf{z}_d^i \in \mathcal{Z}_d$. Thus, we have that setting $\alpha_d^i = 1$ is the unique feasible solution to $\speedFuncApp(\textbf{z}^i_d)$. Thus, we conclude that $\speedFuncApp(\textbf{z}^i_d) = \alpha_d^i \speedFunc(\textbf{z}^i_d) =   \speedFunc(\textbf{z}_d)$.
\end{proof}

In Section~\ref{sec:paramopt} we combine the parametric extension of   $\speedFunc$ with the speed-aware MCND-U to obtain a MILP. 


\subsection{Parametric speed-aware MCND-U}\label{sec:paramopt}
Next, we reformulate the speed-aware MCND-U presented in \eqref{eq:prob_formulation} 
replacing the term $\speedFunc(\boldsymbol{z})$ with its parametric  interpolation $\speedFuncApp(\boldsymbol{z})$ from \eqref{eq:interpGV}: 
\begin{equation}\label{eq:prob_formulation_joint_optimization}
	\begin{aligned}
		\min_{\boldsymbol{x}, \boldsymbol{y}, \{\textbf{z}_d, \boldsymbol{\alpha}_d\}_{d\in\mathcal{D}}} \quad & \sum_{e \in \mathcal{E}} c_e y_e  - \gamma \sum_{d \in \mathcal{D}} \sum_{i= 1}^{n_\texttt{comb}} \alpha^i_d \speedFunc(\boldsymbol{z}^i_d)\\ 
		\text{subject to} \quad & \sum_{k \in \mathcal{K}}\sum_{p \in \mathcal{P}_{k\ni e}} v_k x_p \leq V_e y_e && \forall e \in \mathcal{E}, \\
		&\sum_{p \in \mathcal{P}_k} x_p = 1 && \forall k \in \mathcal{K}, \\
  & z_{od}=\sum_{p\in \mathcal{P}_k} w_p x_p, && \forall k=(o,d)\in\mathcal{K},\\
		& \sum_{i = 1}^{n_\texttt{comb}} \alpha_d^i \textbf{z}_d^i = \textbf{z}_d,\sum_{i = 1}^{n_\texttt{comb}} \alpha_d^i = 1 && \forall d \in \mathcal{D}, \\
		& x_p \in \{0,1\} &&\forall k \in \mathcal{K}, \forall p \in \mathcal{P}_k, \\
		& y_e \in \mathbf{Z}_{+} &&\forall e \in \mathcal{E}, \\
		& z_{od} \in \{0,1\} &&\forall o \in \mathcal{O}, d\in \mathcal{D}, \\		
		&\boldsymbol{\alpha}_d \in \mathbb{R}_{+}^{n_\texttt{comb}} && \forall d \in \mathcal{D}.
	\end{aligned}
\end{equation}


The above MILP is equivalent to the original problem~\eqref{eq:prob_formulation}, as the interpolation from~\eqref{eq:interpGV} is exact at  integer points, as discussed in Proposition~\ref{Prop1}. The main advantage of our reformulation is that it can be solved with off-the-self solvers. Unfortunately, for each destination $d \in \mathcal{D} $ there are up to $2^{n_\texttt{comb}}$ pre-computed vectors $\textbf{z}_d^i$ that make the reformulation intractable for real-world problems. Thus, in the next section, we propose an algorithm to select only a subset of assignments  to reduce the problem dimensionality. 

\subsection{Approximation Strategy}\label{sec:app}

Estimating the coverage function $\speedFunc(\textbf{z})$ at all integer points $\textbf{z}\in \mathcal{Z}_d$ from \eqref{eq:set_CPT} and then solving \eqref{eq:prob_formulation_joint_optimization} are both intractable due to the very large number of such points. In this section, we provide heuristics to select a subset of such vectors $\tilde{\mathcal{Z}}_d \subset \mathcal{Z}_d$, to simplify \eqref{eq:prob_formulation_joint_optimization} without losing accuracy on the resulting speed and cost tradeoff. Dropping vectors from $\mathcal{Z}_d$ means that certain paths will always be chosen to be long, and hence, our heuristics attempt  to drop the origins that are expected to have the smallest contributions to coverage. 

The first step is to rank all origins in terms of individual coverage achieved by taking the short path from this origin and long paths from all others,  denoted with $I_d(\boldsymbol{1}_o)$ for origin $o\in\mathcal{O}$. Calculating individual coverage is a cheap operation (only $n_O$ calculations per destination), and intuitively helps us prioritize origins with large inventory capabilities—notice, however, that due to inventory overlap, choosing the top origins in this regard does not guarantee maximal coverage. Below we use the notation  $\mathcal{T}_d(\kappa)\subseteq \mathcal{O}$ to denote  the top $\kappa$ origins with individual coverage. Note, that $\kappa$ is a user-defined hyper-parameter, which controls the complexity/performance tradeoff of our heuristic. 

Our strategy employs four sets of vectors, $\tilde{\mathcal{Z}}_d = \{\mathcal{H}_0, \mathcal{H}_1, \mathcal{H}_2, \mathcal{H}_3\}$ that contain a subset of all possible speed variable assignments. We use the terms ``short'' and ``long''  to refer to the speed variable assignments: when we say an origin has a ``short'' path to a destination ($z_{od}=1$), it means that origin $o$ can offer next-day delivery to destination $d$. Conversely, a ``long'' path ($z_{od}=0$) indicates that origin $o$ cannot provide next-day delivery to destination $d$. Based on this terminology we define the following sets:
\begin{enumerate}
\item $\mathcal{H}_0$: all short/long combinations of origins in $\mathcal{T}_d(\kappa)$ ($2^\kappa$ cases).
\item $\mathcal{H}_1$: all origins in $\mathcal{T}_d(\kappa)$ are short, and one of the remaining origins short ($n_o-\kappa$ cases)
\item $\mathcal{H}_2$: all origins in $\mathcal{T}_d(\kappa)$ are long, and one of the remaining origins short ($n_o-\kappa$ cases)	
\item $\mathcal{H}_3$: all origins in $\mathcal{T}_d(i), i = \kappa+1,\dots, n_o$ are short and the rest long ($n_o-\kappa$ cases)
\end{enumerate}
Note that our heuristic strategy has reduced significantly the amount of vectors for which we need to calculate coverage and add to \eqref{eq:prob_formulation_joint_optimization}, from $2^{n_O}$  down to $2^\kappa+3(n_O-\kappa)$, and we can use $\kappa$ to control how small this number is, trading off with the achieved accuracy of the optimization. We provide an illustrative example for $\kappa=2$ and $n_o=5$:
\begin{equation*}
    \begin{aligned}
        \mathcal{H}_0 = \{ &\tilde{\textbf{z}}_d^1 = \begin{bmatrix} 0 & 0 & 0 & 0 & 0 \end{bmatrix}^\top, \tilde{\textbf{z}}_d^2 = \begin{bmatrix} 1 & 0 & 0 & 0 & 0 \end{bmatrix}^\top,\\
        &\tilde{\textbf{z}}_d^3 =\begin{bmatrix} 0 & 1 & 0 & 0 & 0 \end{bmatrix}^\top, \tilde{\textbf{z}}_d^4 = \begin{bmatrix} 1 & 1 & 0 & 0 & 0 \end{bmatrix}^\top\}.
    \end{aligned}
\end{equation*}
\begin{equation*}
    \begin{aligned}
        \mathcal{H}_1 = \{ \tilde{\textbf{z}}_d^5 = \begin{bmatrix} 1 & 1 & 1 & 0 & 0 \end{bmatrix}^\top, \tilde{\textbf{z}}_d^6 = \begin{bmatrix} 1 & 1 & 0 & 1 & 0 \end{bmatrix}^\top, \tilde{\textbf{z}}_d^7 =\begin{bmatrix} 1 & 1 & 0 & 0 & 1 \end{bmatrix}^\top\}.
    \end{aligned}
\end{equation*}
\begin{equation*}
    \begin{aligned}
        \mathcal{H}_2 = \{ \tilde{\textbf{z}}_d^8 = \begin{bmatrix} 0 & 0 & 1 & 0 & 0 \end{bmatrix}^\top, \tilde{\textbf{z}}_d^9 = \begin{bmatrix} 0 & 0 & 0 & 1 & 0 \end{bmatrix}^\top, \tilde{\textbf{z}}_d^{10} =\begin{bmatrix} 0 & 0 & 0 & 0 & 1 \end{bmatrix}^\top\}.
    \end{aligned}
\end{equation*}
\begin{equation*}
    \begin{aligned}
        \mathcal{H}_3 = \{ \tilde{\textbf{z}}_d^{11} = \begin{bmatrix} 1 & 1 & 1 & 0 & 0 \end{bmatrix}^\top, \tilde{\textbf{z}}_d^{12} = \begin{bmatrix} 1 & 1 & 1 & 1 & 0 \end{bmatrix}^\top, \tilde{\textbf{z}}_d^{13} =\begin{bmatrix} 1 & 1 & 1 & 1 & 1 \end{bmatrix}^\top\}.
    \end{aligned}
\end{equation*}
\begin{algorithm}[t!]
	\caption{Heuristic to select subset $\tilde{\mathcal{Z}}_d$.}
	\label{algo:unique_item_app}
	\begin{algorithmic}[1]
		\REQUIRE $\kappa$.
		\FOR{$d \in \mathcal{D}$}
		\STATE Initialize $\tilde{\mathcal{Z}}_d = \emptyset$.
        \STATE Calculate $I_d(\boldsymbol{1}_o)$ for all $o$, rank them, and derive $\mathcal{T}_d(\kappa)$.
	
    \STATE Add $\mathcal{H}_0=\{\textbf{z}: z_{od} = 1 ~\forall o \in \mathcal{S}, z_{od} = 0 ~\forall o \notin \mathcal{S}, \forall \mathcal{S}\subseteq\mathcal{T}_d(\kappa)\}$.

    \STATE Add $\mathcal{H}_1= \left\{\textbf{z}: z_{od} = 1 ~\forall o \in \mathcal{T}_d(\kappa), z_{\tilde{o}d} = 1~\tilde{o}\notin \mathcal{T}_d(\kappa), z_{od} = 0 ~\forall o \notin \mathcal{T}_d(\kappa)\cup \{\tilde{o}\}\right\}$.

    \STATE Add $\mathcal{H}_2= \left\{\textbf{z}: z_{od} = 0 ~\forall o \in \mathcal{T}_d(\kappa), z_{\tilde{o}d} = 1~\tilde{o}\notin \mathcal{T}_d(\kappa), z_{od} = 0 ~\forall o \notin \mathcal{T}_d(\kappa)\cup \{\tilde{o}\}\right\}$.

    \STATE Add $\mathcal{H}_3=\{\textbf{z}:  z_{od} = 1 ~\forall o\! \in \! \mathcal{T}_d(i) \text{ and }  z_{od} = 0 ~\forall o \notin \mathcal{T}_d(i), i=\kappa+1, \ldots, n_O\}$.
    
		\ENDFOR
		\STATE \textbf{Return:} Set of vectors $\tilde{\mathcal{Z}}_d$ for all $d \in \mathcal{D}$.
	\end{algorithmic}
\end{algorithm}
Given $\tilde{\mathcal{Z}}_d$, we use the below program to approximate coverage:
\begin{equation}\label{eq:interpGV_app}
	\begin{aligned}
		\tilde{I}_d(\textbf{z}_d) = \max_{\boldsymbol{\alpha_d} } \quad &  \sum_{i} \alpha_d^i I_d(\tilde{\textbf{z}}_d^i) \\
		\text{subject to} \quad &  \sum_{i} \alpha_d^i \tilde{\textbf{z}}_d^i = \textbf{z}_d, \\
		&\sum_{i} \alpha_d^i = 1.
	\end{aligned}
\end{equation}
In the next section, we empirically analyze the effect of our approximation technique on run-time and solution quality.

\section{Experiments}\label{sec:experiments}



We evaluate the proposed approach using a range of representative scenarios. First, we examine how optimizing jointly for speed and costs affects the network topology. Afterwards, we analyze the effect of the approximation from Algorithm~\ref{algo:unique_item_app} on the solution quality. To perform these analyses, we randomly generate networks with $n_O$ origins, $n_D$ destinations, and $5$ intermediate nodes that can be used to consolidate volume. For each origin-destination pair, the demand is randomly generated together with the travel times and transportation costs that are proportional to the traveled distance -- the location of all nodes is randomly selected from a uniform distribution. To serve one origin-destination pair, the optimizer can select either a direct path connecting the two nodes, or a path going through one of the five intermediate nodes. We assume that overall there are $n_{\texttt{it}} = 50 n_O$ unique items that we can offer to customers. For each origin $i$, we generate a random vector $v_i \in \{0, 1\}^{n_{\texttt{it}}}$, where each $j$th entry indicates if item $j$ is stored at origin $i$. If the travel time from an origin $o$ to a destination $d$ is less than ${\texttt{max\_tt}}=8$ hours, we assume that  we can offer NND for all items stored at origin $o \in \mathcal{O}$.

\begin{table}[t]
	\caption{Experimental results for four randomly generated network. For each network, we run the proposed method for $\gamma \in \{0, 0.1, 1\}$ to show the effect of revenues from NDD on the network topology.}
	\label{tab:run_comparison}
	\centering
	\begin{tabular}{rrrrrrrrc}
		\toprule
		$n_O$ & $n_D$ & \textbf{Costs} & \textbf{Avg. Items App.} & \textbf{Avg. Items} & $\gamma$ & Directs & Sol. Time & \%Gap \\
		\midrule
		10 & 10 & 689.1 & 0 & 342.8 & 0 & 20 & 58.4 & 0.06 \\ 
		10 & 10 & 698.5 & 383.7 & 383.7 & 0.1 & 26 & 10.9 & 0 \\
		10 & 10 & 731.9 & 404 & 404 & 1 & 30 & 2.6 & 0.02 \\ 
		\midrule
		10 & 20 & 1419.2 & 0 & 370.75 & - & 58 & 7200 & 1.85 \\
		10 & 20 & 1435.5 & 403.55 & 403.55 & 0.1 & 72 & 7200 & 2.51 \\
		10 & 20 & 1490.4 & 411.6 & 411.6 & 1 & 80 & 7200 & 0.19 \\ 
		\midrule
		20 & 10 & 1392.2 & 0 & 770.4 & 0 & 63 & 1196.1 & 0.1 \\ 
		20 & 10 & 1413.6 & 806.5 & 811.2 & 0.1 & 73 & 1799.2 & 0.1 \\ 
		20 & 10 & 1501.8 & 839.4 & 839.4 & 1 & 98 & 438.8 & 0.1 \\ 
		\midrule
		50 & 10 & 3528.2 & 2010 & 2032.2 & 0 & 149 & 7200 & 7.55 \\ 
		50 & 10 & 3489.8 & 2025.3 & 2037.3 & 0.1 & 146 & 7200 & 3.65 \\ 
		50 & 10 & 3653.4 & 2092.7 & 2092.7 & 1 & 178 & 7200 & 0.63 \\ 
		\midrule
		100 & 100 & 62204.2 & 0 & 3651.1 & 0 & 2254 & 7200 & 2.82 \\ 
		100 & 100 & 65258.6 & 4154.37 & 4172.14 & 0.1 & 3335 & 7200 & 8.43 \\
		100 & 100 & 66314.6 & 4211.57 & 4211.57 & 1 & 3383 & 7200 & 0.58 \\ 
		\bottomrule
	\end{tabular}
\end{table}


\begin{table}[b!]
	\centering
	\begin{tabular}{l|l|l|l|l|l|l|l|l}
		\midrule
		\textbf{$n_O$} & \textbf{$n_D$} & \textbf{Costs} & \textbf{App. Rev.} & \textbf{Rev.} & \textbf{Cost - (App. Rev.)} & \textbf{Cost - Rev.}  & \textbf{$\kappa$} & \textbf{\%Gap}  \\ \midrule
		\textcolor{blue}{10} & \textcolor{blue}{10} & \textcolor{blue}{689.1} & \textcolor{blue}{-} & \textcolor{blue}{342.8} & \textcolor{blue}{689.1} & \textcolor{blue}{346.3} & \textcolor{blue}{-} & \textcolor{blue}{0} \\ 
		10 & 10 & 698.5 & 383.7 & 383.7 & 314.8 & \textbf{314.8} & 1 & 0 \\ 
		10 & 10 & 698.5 & 383.7 & 383.7 & 314.8 & \textbf{314.8} & 5 & 0 \\ 
		10 & 10 & 698.5 & 383.7 & 383.7 & 314.8 & \textbf{314.8} & 10 & 0 \\ \midrule
		\textcolor{blue}{10} & \textcolor{blue}{20} & \textcolor{blue}{1419.2} & \textcolor{blue}{-} & \textcolor{blue}{370.7} & \textcolor{blue}{1419.2} & \textcolor{blue}{677.7} & \textcolor{blue}{-} & \textcolor{blue}{1.8} \\ 
		10 & 20 & 1459.7 & 405.9 & 407 & 647.9 & 645.7 & 1 & 3.4 \\ 
		10 & 20 & 1434.1 & 397.8 & 400.7 & 638.5 & 632.7 & 5 & 2.8 \\ 
		10 & 20 & 1435.5 & 403.5 & 403.5 & 628.4 & 628.4 & 10 & 2.5 \\ \midrule
		\textcolor{blue}{20} & \textcolor{blue}{10} & \textcolor{blue}{1392.2} & \textcolor{blue}{-} & \textcolor{blue}{770.4} & \textcolor{blue}{1392.2} & \textcolor{blue}{621.8} & \textcolor{blue}{-} & \textcolor{blue}{0.1} \\ 
		20 & 10 & 1422.9 & 807.3 & 815.7 & 615.6 & 607.2 & 1 & 0.1 \\ 
		20 & 10 & 1422.9 & 807.3 & 815.7 & 615.6 & 607.2 & 5 & 0.1 \\ 
		20 & 10 & 1413.6 & 806.5 & 811.2 & 607.1 & \textbf{602.4} & 10 & 0.1 \\ \midrule
		\textcolor{blue}{50} & \textcolor{blue}{10} & \textcolor{blue}{3375.8} & \textcolor{blue}{-} & \textcolor{blue}{1845.2} & \textcolor{blue}{3375.8} & \textcolor{blue}{1530.6} & \textcolor{blue}{-} & \textcolor{blue}{1.8} \\ 
		50 & 10 & 3528.2 & 2010 & 2032.2 & 1518.2 & 1496.0 & 1 & 7.5 \\ 
		50 & 10 & 3499.0 & 2031.5 & 2039.1 & 1467.5 & 1459.9 & 5 & 3.8 \\ 
		50 & 10 & 3489.8 & 2025.3 & 2037.3 & 1464.4 & \textbf{1452.4} & 10 & 3.6 \\ \midrule
		\textcolor{blue}{100} & \textcolor{blue}{100} & \textcolor{blue}{62204.2} & \textcolor{blue}{-} & \textcolor{blue}{3651.1} & \textcolor{blue}{62204.2} & \textcolor{blue}{25693.2} & \textcolor{blue}{-} & \textcolor{blue}{2.8} \\ 
		100 & 100 & 65911.4 & 4159.3 & 4175.7 & 24317.9 & 24154.2 & 1 & 11.2 \\ 
		100 & 100 & 65531.7 & 4156.8 & 4173.1 & 23963.2 & 23800.5 & 5 & 9.5 \\ 
		100 & 100 & 65258.6 & 4154.3 & 4172.1 & 23714.9 & \textbf{23537.2} & 10 & 8.4 \\ \midrule
	\end{tabular}
	\vspace{0.1cm}
	\caption{Experimental results for three randomly generated network. For each network, we run the proposed method for $\gamma = 0.1$ and $\kappa \in \{1, 5, 10\}$. As baseline we compute the cost optimal network. Thus, for the baseline (in blue) we do not have approximated revenues and a value for the parameter $\kappa$ that is not used to in the optimization.}\label{tab:run_theta_comparison}
\end{table}

\subsection{Trading-off transportation costs and speed}
In this section, we demonstrate through empirical experiments how the parameter $\gamma$ controls the trade-off between transportation costs and speed-coverage in the network design. Higher values of $\gamma$ result in networks that provide speed-paths for a broader range of unique items, while lower values prioritize cost minimization by encouraging consolidation through intermediate hubs.
Table~\ref{tab:run_comparison} shows results for five randomly generated networks. For each network, we solve three optimization problems: in the first problem, we optimize only for transportation costs -- i.e., we set $\gamma = 0$; in the second problem we set $\gamma = 0.1$; and in the third we use $\gamma = 1$. The solver terminates either when the gap is below $0.1\%$ or the solution time exceeds two hours. As expected, for larger values of $\gamma$ both transportation costs and the average number of unique items eligible for NDD increase, i.e., the optimizer decides to open more expensive connections to gain revenues from NDD. Note that the number of direct paths (not crossing any intermediate hub) increases with $\gamma$, as they have lower transit time and thus are more likely to offer NDD. In all experiments, we set the parameter $\kappa$ from Algorithm~\ref{algo:unique_item_app}  equal to $10$, meaning that the approximation from~\eqref{eq:interpGV_app} is exact for the unique items delivered by the top $10$ origins for each destination -- see Section~\ref{sec:res_theta_effect} for an empirical analysis on the effect of $\kappa$ on the solution. Indeed, we notice from Table~\ref{tab:run_comparison} that for $n_O = 10$ the approximate average number of unique items eligible for NDD (column \textbf{Avg. Items App.}) matches the exact number (column \textbf{Avg. Items}). On the other hand, for $n_O = 100$, the proposed strategy only computes a lower bound to the average number of unique items eligible for NDD at a destination $d \in \mathcal{D}$.
 
 \begin{table}[t]
 	\caption{Experimental results showing how the computational time changes as a function of the user defined parameter $\kappa \in \{1, 5, 10\}$.}
 	\label{tab:run_kappa_comparison}
 	\centering
 	\small
 	\begin{tabular}{rrrrrrrrc}
 		\toprule
 		$n_O$ & $n_D$ & \textbf{Cost-(App. Rev)} & \textbf{Cost-Rev} & Pre. Time & Sol Time & Gap(\%) & $\kappa$ & $n_\texttt{app}$ \\ 
 		\midrule
 		20 & 10 & 1392.2 & 621.8 & 0.08 & 1184.9 & 0.1 & - & 0 \\ 
 		20 & 10 & 615.6 & 607.2 & 0.08 & 577.8 & 0.1 & 1 & 59 \\ 
 		20 & 10 & 615.6 & 607.2 & 0.11 & 261.2 & 0.1 & 5 & 89 \\
 		20 & 10 & 607.1 & 602.4 & 1.59 & 3851.9 & 0.1 & 10 & 1081 \\ 
 		20 & 10 & 602.8 & 601.2 & 6.52 & 3146.9 & 0.1 & 12 & 4153 \\ 
 		20 & 10 & 599.0 & 599.0 & 36.0 & 6871.8 & 0.1 & 14 & 16441 \\ 
 		20 & 10 & 599.0 & 599.0 & 334.7 & 6978.5 & 0.1 & 16 & 65593 \\
 		\bottomrule
 	\end{tabular}
 \end{table}

\subsection{The effect of the proposed approximation}\label{sec:res_theta_effect}
In this section, we test how the parameter $\kappa$ from Algorithm~\ref{algo:unique_item_app} affects the solution quality.  Table~\ref{tab:run_theta_comparison} shows results for five different networks and parameter $\kappa \in \{1, 5, 10\}$. The table reports also results for the baseline cost-optimal network (in blue) that is obtained minimizing only the transportation costs, i.e., the optimizer does not approximate revenues from NDD and therefore the parameter $\kappa$ is not required to run the baseline. For all cases we report the total cost defined as the difference between the transportation costs and revenues from speed-paths\footnote{As in Problem~\ref{eq:prob_formulation}, revenues from speed-paths are computed by summing all unique items eligible for NDD at each destination and using the conversion factor $\gamma$.}  (column \textbf{Cost-Rev.}). For $\kappa = 1$, in Algorithm~\ref{algo:unique_item_app} we do not consider different combinations of origins offering NDD at a destination, but we simply sort origins by number of unique items eligible for NDD and we use the heuristic described in Section~\ref{sec:app}. This strategy allows us to prune the origins combinations used to construct the approximation from~\eqref{eq:interpGV_app}. On the other hand for $\kappa \in \{5, 10\}$, after sorting the origins by the number of unique items eligible for NDD, we consider all possible combination for the top $\kappa$ origins and for the remaining we leverage the heuristic from Algorithm~\ref{algo:unique_item_app}. As discussed in Section~\ref{sec:app}, this approximation allows us to reduce the number of constraints needed to build function~\eqref{eq:interpGV}. However when $\kappa$ is smaller than the number of origins $n_O$, the number of unique items computed using~\eqref{eq:interpGV_app} is only an approximation. This fact is shown in Table~\ref{tab:run_theta_comparison}, where we compare  the true revenues (column \textbf{Rev.}) and the approximated revenues (column \textbf{App. Rev.}) obtained by multiplying the number of unique items eligible for NDD by the coefficient $\gamma$. Finally, we notice that the total cost (column \textbf{Cost-Rev.}) defined as the difference between the transportation cost and the revenues from NDD decreases for larger value of $\kappa$. 
Note that for $\kappa = 1$, the approximation from~\eqref{eq:interpGV_app} is constructed using $n_{\texttt{app}} = 59$ data points, while for $\kappa = 10$ the approximation is constructed with $n_{\texttt{app}} = 1081$, i.e., for $\kappa = 10$  we increase by $94.5\%$ the number of data points used compared with the approximation with $\kappa = 1$. 
From the experiments in Table~\ref{tab:run_theta_comparison}, we notice that using $94.5\%$ more data points in the approximation from~\eqref{eq:interpGV_app} improves on average the network cost by $1.79\%$, i.e., as expected a more accurate approximation of revenues from NDD allows us to design a better network by trading off transportation costs and from NDD services.
It is also interesting to notice that even for $\kappa = 1$, our approach is able to reduce the overall cost -- i.e., the cost defined as the difference between transportation cost and revenues from speed-paths -- by 8.36\% compared to the baseline (in blue).

Finally, we investigate the effect of the parameter $\kappa$ on the computation time. Table~\ref{tab:run_kappa_comparison} shows both solver times and pre-processing times needed to compute the data points $n_\texttt{app}$ used to construct the approximation from~\eqref{eq:interpGV_app}. We notice that increasing $\kappa$ leads to higher pre-processing and solver time, as more data points are used for approximating the number of unique items eligible for NDD. However, we notice that increasing $\kappa$ from $10$ to $16$ results in only a $0.57\%$ cost improvement, while the computation cost increases by $1.9$x. This result highlights the effectiveness of the approximation from Algorithm~\ref{algo:unique_item_app}, where we ranked origins by the number of unique items eligible for NDD and considered all possible combination for a subset of them.
\section{Conclusion}\label{sec:conclusions}

In this paper, we consider the problem of jointly optimizing transportation costs and the share of inventory with a Next Day Delivery (NDD) option. Compared to previous methods, we account for overlapping inventory at origin nodes and how it affects the selection of speed-paths, i.e., origin-destination connections with a short travel time that enable NDD. The inventory is modeled with a coverage function, that requires the computation of a very large number of integer points. To tackle this complexity, we present an approach based on parametric optimization to construct a continuous extension of the inventory coverage function. Such an approximation requires solving a parametric linear program where the number of constraints increase exponentially with the number of origin nodes. To mitigate this issue, we present a sampling algorithm to tradeoff the accuracy of our approximation with computational complexity. The efficacy of the proposed approach is demonstrated on randomly generated networks, where we show that our strategy beats the baseline approach that computes a cost optimal network without optimizing for revenues from NDD.



\bibliography{mybib.bib}

\end{document}